\newtheorem{theorem}{Theorem}
\theoremstyle{definition}
\begin{document}

\begin{center}
\uppercase{\bf \boldmath The $3\times 3$ Solution to the Game of Tak}
\vskip 20pt
{\bf Joseph E. Marrow}\\
{\it Department of Mathematics, Southern Utah University, Cedar City, UT, USA}\\
{\tt josephmarrow@suu.edu}\\
\end{center}
\vskip 20pt

\centerline{\bf Abstract}

\noindent
We provide answers to natural combinatorial questions which arise from learning the rules to Tak. We solve the discarded cases of the $3\times 3$ game not previously considered by Joshua Achiam, showing that Player $1$ has a forced win from any starting position. Diagrams for both his and our solutions are provided in the hopes of making the results more accessible. 

\section{Introduction}
Tak is a strategy game first described in a work of fiction \cite{wiseman} which was later made into a playable game in \cite{takrules}. Some study has been done into variations of the game \cite{math}; a considerable amount not published, but kept on various forums. We provide some original results, and answer some natural combinatorial questions which directly follow from the rules of the game. In the Appendix we also type up Joshua Achiam's solution to the $3\times 3$ game, providing various diagrams to help the reader follow. He omits fours cases of initial position; which, while not needed for $W$ to ensure victory, the author feels should be included for the sake of completeness. These generalize the $3\times 3$ solution to show that $W$ can secure victory independent of the initial positions of the stones.

%\todo{referencing the Appendix yields 3, not A}
\begin{figure}[h!]
\label{setup}
\caption{}
\centering
\begin{tikzpicture}
    % Draw the grid lines
    \foreach \x in {0,1,2,3,4,5} {
        \draw (\x,0) -- (\x,5); % Vertical lines
        \draw (0,\x) -- (5,\x); % Horizontal lines
    }
    \filldraw (2.5, 6) circle (2pt) node[above] {$B$};

    \draw (2.5, -1) circle (2pt) node[below] {$W$};
    \node[above, rotate=180] at (2.5, 5.5) {North}; 
    \node[below] at (2.5, 0) {South}; 
    \node[rotate=90] at (5.5, 2.5) {East}; 
    \node[rotate=-90] at (-0.5, 2.5) {West};

    \foreach \y in {1,2,3,4,5} {
        \node at (0.2, \y - 0.5) {\y};
    }

    \foreach \x [count=\i] in {a,b,c,d,e} {
        \node at (\i - 0.5, 0.2) {\x};
    }
    \end{tikzpicture}
\end{figure}

\section{Rules}\label{rules}
Many explanations of the rules can be found \cite{takrules, master1, master2}, but we explain the fundamental gameplay here. An $n\times n$ board is placed between two players, one playing white ($W$) and the other playing black ($B$). The edge of the board closest to white is the south side, with the remaining sides labeled accordingly.

Each player has a number of appropriately colored stones, determined by the size of the board, see Figure \ref{stonecount}. These stones can either be played horizontally (road stones, also called flat stones) or vertically (standing stones, also called walls). The goal of the game is to be the first player to create a road of a single color stones using orthogonally connected squares which joins opposite edges (north -- south, east -- west). Standing stones do not count as part of the road. The game begins with $W$ placing one of blacks road stones, $B$ placing one of $W$'s road stones, and then players alternate placing or moving one of their own stones\footnote{Sequential games often alternate who plays first, throughout we will suppose it is White.}.

\begin{table}
\label{stonecount}
\begin{center}
\begin{tabular}{ccc}
Board Size & Number of stones & Capstones\\
\hline
3 & 10 & 0\\
4 & 15 & 0\\
5 & 21 & 1\\
6 & 30 & 1\\
7 & 40 & 2\\ %I cannot tell if this size is official, and there's a gentleman who has generalized this to any size board.
8 & 50 & 2
\end{tabular}
\end{center}
\end{table}

Players may move stones orthogonally in any available direction. Stones (road or standing) may move onto road stones, stacking them into a tower. Color does not matter when stacking stones. There is no limit to how high a stack of stones can become, excepting the limit of pieces available. Stones cannot be stacked upon a standing stone. Stones may only be placed in empty squares. If there are no empty squares, or if one player is out of stones, the game ends. In this case victory is determined by whoever has the most flat stones controlling squares\footnote{Ties are possible if the number of flat stones is equal, but this is generally considered uncommon. While not an official rule, most tournament play also requires a new stone to be dropped at least once every $30$ moves or the game will be drawn. This is analogous to the $50$-move rule in chess, and forces every game to eventually end.}.

There is a special stone, called a capstone, which can be moved on top of a standing stone. When this happens, the standing strong is crushed (flattened) into a flat stone. No piece can stack on top of a capstone, but the capstone does count as part of the road.

The player whose stone is on top of a tower controls the tower. When moving a tower, the player who controls the tower may take up to $n$ stones (the carry limit) from the top, and move in one direction, leaving at least one stone from the bottom of the stack behind for each square moved through.

So on a $4\times 4$ grid, if $B$ chooses to move the top four stones in the tower shown below, there are $8$ options for how the tower could fall. In the following diagram, the tower is falling to the right.

\begin{center}
\begin{tikzpicture}
    % Define rectangle dimensions
    \def\width{0.75}
    \def\height{0.25}

    \filldraw[fill=lightgray, draw=black] (0, 0) rectangle (\width, \height);
    \filldraw[fill=lightgray, draw=black] (0, \height) rectangle (\width, 2 * \height);
    \filldraw[fill=white, draw=black] (0, 2 * \height) rectangle (\width, 3 * \height);
    \filldraw[fill=white, draw=black] (0, 3 * \height) rectangle (\width, 4 * \height);
    \filldraw[fill=lightgray, draw=black] (0, 4* \height) rectangle (\width, 5 * \height);
    % Stack of rectangles 
\end{tikzpicture}\\
The tower
\end{center}

\begin{center}
\begin{tabular}{c|c}
Option 1 & Option 2\\
\begin{tikzpicture}
    % Define rectangle dimensions
    \def\width{0.75}
    \def\height{0.25}

    \filldraw[fill=lightgray, draw=black] (0, 0) rectangle (\width, \height);
    \filldraw[fill=lightgray, draw=black] (\width+0.5, 0) rectangle (2*\width +0.5, 1 * \height);
    \filldraw[fill=white, draw=black] (\width+0.5, 1 * \height) rectangle (2*\width +0.5, 2 * \height);
    \filldraw[fill=white, draw=black] (\width+0.5, 2 * \height) rectangle (2*\width +0.5, 3 * \height);
    \filldraw[fill=lightgray, draw=black] (\width+0.5, 3* \height) rectangle (2*\width +0.5, 4 * \height);
    % Stack of rectangles 
\end{tikzpicture}
&
\begin{tikzpicture}
    % Define rectangle dimensions
    \def\width{0.75}
    \def\height{0.25}

    \filldraw[fill=lightgray, draw=black] (0, 0) rectangle (\width, \height);
    \filldraw[fill=lightgray, draw=black] (1.5*\width, 0) rectangle (2.5*\width, 1 * \height);
    \filldraw[fill=white, draw=black] (3*\width, 0) rectangle (4*\width, 1 * \height);
    \filldraw[fill=white, draw=black] (3*\width, 1 * \height) rectangle (4*\width, 2 * \height);
    \filldraw[fill=lightgray, draw=black] (3*\width, 2* \height) rectangle (4*\width, 3 * \height);
    % Stack of rectangles 
\end{tikzpicture}
\\
\hline
Option 3 & Option 4\\
\begin{tikzpicture}
    % Define rectangle dimensions
    \def\width{0.75}
    \def\height{0.25}

    \filldraw[fill=lightgray, draw=black] (0, 0) rectangle (\width, \height);
    \filldraw[fill=lightgray, draw=black] (1.5*\width, 0) rectangle (2.5*\width , \height);
    \filldraw[fill=white, draw=black] (3*\width, 0) rectangle (4*\width, \height);
    \filldraw[fill=white, draw=black] (4.5*\width,  0) rectangle (5.5*\width, \height);
    \filldraw[fill=lightgray, draw=black] (4.5*\width, \height) rectangle (5.5*\width, 2 * \height);
    % Stack of rectangles 
\end{tikzpicture}\\
&
\begin{tikzpicture}
    % Define rectangle dimensions
    \def\width{0.75}
    \def\height{0.25}

    \filldraw[fill=lightgray, draw=black] (0, 0) rectangle (\width, \height);
    \filldraw[fill=lightgray, draw=black] (1.5*\width, 0) rectangle (2.5*\width , \height);
    \filldraw[fill=white, draw=black] (3*\width, 0) rectangle (4*\width, \height);
    \filldraw[fill=white, draw=black] (4.5*\width, 0) rectangle (5.5*\width, \height);
    \filldraw[fill=lightgray, draw=black] (6*\width, 0) rectangle (7*\width, \height);
    % Stack of rectangles 
\end{tikzpicture}
\\
\hline
Option 5 & Option 6\\
\begin{tikzpicture}
    % Define rectangle dimensions
    \def\width{0.75}
    \def\height{0.25}

    \filldraw[fill=lightgray, draw=black] (0, 0) rectangle (\width, \height);
    \filldraw[fill=lightgray, draw=black] (1.5*\width, 0) rectangle (2.5*\width, 1 * \height);
    \filldraw[fill=white, draw=black] (3*\width, 0) rectangle (4*\width, 1 * \height);
    \filldraw[fill=white, draw=black] (3*\width, 1 * \height) rectangle (4*\width, 2 * \height);
    \filldraw[fill=lightgray, draw=black] (4.5*\width, 0) rectangle (5.5*\width,  \height);
    % Stack of rectangles 
\end{tikzpicture}
&
\begin{tikzpicture}
    % Define rectangle dimensions
    \def\width{0.75}
    \def\height{0.25}

    \filldraw[fill=lightgray, draw=black] (0, 0) rectangle (\width, \height);
    \filldraw[fill=lightgray, draw=black] (1.5*\width, 0) rectangle (2.5*\width, 1 * \height);
    \filldraw[fill=white, draw=black] (1.5*\width, \height) rectangle (2.5*\width, 2 * \height);
    \filldraw[fill=white, draw=black] (3*\width, 0) rectangle (4*\width, \height);
    \filldraw[fill=lightgray, draw=black] (3*\width, \height) rectangle (4*\width, 2 * \height);
    % Stack of rectangles 
\end{tikzpicture}\\
\hline
Option 7 & Option 8\\
\begin{tikzpicture}
    % Define rectangle dimensions
    \def\width{0.75}
    \def\height{0.25}

    \filldraw[fill=lightgray, draw=black] (0, 0) rectangle (\width, \height);
    \filldraw[fill=lightgray, draw=black] (1.5*\width, 0) rectangle (2.5*\width, 1 * \height);
    \filldraw[fill=white, draw=black] (1.5*\width, 1 * \height) rectangle (2.5*\width, 2 * \height);
    \filldraw[fill=white, draw=black] (1.5*\width, 2 * \height) rectangle (2.5*\width, 3 * \height);
    \filldraw[fill=lightgray, draw=black] (3*\width, 0) rectangle (4*\width, \height);
    % Stack of rectangles 
\end{tikzpicture}
&
\begin{tikzpicture}
    % Define rectangle dimensions
    \def\width{0.75}
    \def\height{0.25}

    \filldraw[fill=lightgray, draw=black] (0, 0) rectangle (\width, \height);
    \filldraw[fill=lightgray, draw=black] (1.5*\width, 0) rectangle (2.5*\width, 1 * \height);
    \filldraw[fill=white, draw=black] (1.5*\width, \height) rectangle (2.5*\width, 2 * \height);
    \filldraw[fill=white, draw=black] (3*\width, 0) rectangle (4*\width, \height);
    \filldraw[fill=lightgray, draw=black] (4.5*\width, 0) rectangle (5.5*\width, \height);
    % Stack of rectangles 
\end{tikzpicture}
\end{tabular}
\end{center}

Now, $B$ need not remove all four, but if they choose to do so, there are only these $8$ ways for the tower to fall. Since this tower is larger than the carry limit, it is called a tall tower.

\begin{theorem}
There are $2^{\min(n,t)}-1$ possible ways a tower with height $t$ can fall.
\end{theorem}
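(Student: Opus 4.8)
The plan is to organize the count according to how many stones are lifted off the top of the tower. Fixing the direction of motion (as in the ``falling to the right'' diagrams), I would first argue that a single fall is completely encoded by two pieces of data: the number $k$ of stones taken from the top, and the way those $k$ stones are deposited along the squares the stack travels over. The carry limit forces $1 \le k \le \min(n,t)$, since at most $n$ stones may be lifted and at most $t$ are available. For each fixed $k$, the rule ``leave at least one stone for each square moved through'' means the $k$ carried stones are distributed, in order from bottom to top, among the consecutive squares of the path, with every visited square receiving at least one stone. Such a deposit is exactly an ordered composition of $k$ into positive parts.

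Next I would count, for fixed $k$, the number of these compositions. A composition of $k$ into $m$ positive parts is counted by $\binom{k-1}{m-1}$ (place $m-1$ dividers into the $k-1$ gaps between unit stones), and summing over all admissible path lengths $m$ from $1$ to $k$ gives $\sum_{m=1}^{k}\binom{k-1}{m-1}=2^{k-1}$. This already explains the $8 = 2^{3}$ options displayed for the case $k=4$. The cleanest justification is the direct bijection between compositions of $k$ and subsets of the $k-1$ internal gaps, which yields $2^{k-1}$ without invoking the binomial identity at all.

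Finally, since distinct values of $k$ leave distinct numbers $t-k$ of stones behind on the origin square, no fall is counted twice across different $k$, and the total is the geometric sum
\[
\sum_{k=1}^{\min(n,t)} 2^{k-1} \;=\; 2^{\min(n,t)}-1 .
\]
The part requiring genuine care is the bookkeeping at the extremes rather than any deep combinatorics: I must confirm that lifting exactly $k=\min(n,t)$ stones is always legal, that the single-stone case $k=1$ is included (this is precisely what the $-1$ records, as the empty ``lift nothing'' option is excluded), and that the path is long enough for every composition to be realizable, so that the layered sum is neither an over- nor under-count. Granting these boundary checks, the result follows immediately.
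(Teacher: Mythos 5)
Your argument is correct and follows essentially the same route as the paper's proof, which simply cites the count of compositions of $k$ (OEIS A000079) and evaluates $\sum_{k=1}^{\min(n,t)} 2^{k-1} = 2^{\min(n,t)}-1$. Your write-up supplies the details the paper leaves implicit --- the bijection between falls and compositions, the $2^{k-1}$ count per fixed number of lifted stones, and the disjointness across different $k$ --- so it is a fuller version of the same decomposition rather than a different approach.
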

\begin{proof} Considering the sum of compositions of integers \cite[A000079]{oeis} we find
$$
\sum_{k=1}^{\min(n, t)} 2^{k-1} = 2^{\min(n,t)}-1.
$$
\end{proof}

\begin{theorem}
If a stone must be placed every $k$ moves, then the maximum length of a game is bounded by $2f(n)k$.
\end{theorem}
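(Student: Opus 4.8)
The plan is to exploit the finiteness of the stone supply together with the forced-placement rule. I read $f(n)$ as the number of pieces available to a single player (the piece count recorded in Figure~\ref{stonecount}), so that the two players together control exactly $2f(n)$ pieces over the course of the game.

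First I would sort the moves of a game into two types: \emph{placements}, in which a player introduces one of their pieces onto a previously empty square, and \emph{movements}, in which an existing stone or tower is relocated. The key structural observation is that in Tak no piece is ever removed from the board — a placed stone can only be moved or buried in a stack, never returned to a player's reserve. Consequently the number of placements occurring over the entire game is at most the total number of pieces, namely $2f(n)$; once every piece has been placed, no further placement is possible.

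Next I would invoke the hypothesis that a stone must be placed at least once every $k$ moves, which I read as forbidding any run of $k$ consecutive movements. The placements therefore partition the movements into gaps, and each gap contains at most $k-1$ movements. Since the game opens with a placement, the gap preceding the first placement is empty, so with $P$ placements there are at most $P$ gaps, each contributing at most $k-1$ movements. Hence the total number of moves is at most $P + P(k-1) = Pk$.

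Combining the two counts with $P \le 2f(n)$ gives game length at most $2f(n)\cdot k$, the desired bound. The only real subtlety is justifying that placements are capped by the piece count; this rests entirely on the rule that stones never leave the board once played, so I would state that observation explicitly before running the pigeonhole-style estimate. Everything else is the elementary counting just described.
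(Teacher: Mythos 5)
Your argument is correct and is essentially the paper's own (much terser) proof: at most $2f(n)$ placements can ever occur because pieces never return to the reserve, and the forced-placement rule caps each run of movements at $k-1$, giving $2f(n)\cdot k$. The one hair to split is that your gap count omits a possible run of up to $k-1$ movements \emph{after} the final placement (you only dismiss the gap before the first one); this does not damage the bound, since if $P=2f(n)$ the game ends on that last placement, and otherwise $P\le 2f(n)-1$ absorbs the extra $k-1$ moves.
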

\begin{proof}
%Consider the game where players move their initial pieces back and forth, and move other pieces onto a large tower to prevent them cluttering up the board. 
Let $f(n)$ be the function which takes in the board size, and gives the total number of available stones for each player. That is $f(3)=10$, $f(4)=15$, $f(5)=22$, $f(6)=31$, $f(7)=42$, and $f(8)=52$. %If $k$ is even, then the players will alternate who needs to place a stone, and the length of the game will be $(2f(n)-3)k+2$. If $k$ is odd is it bounded by 
Then $2f(n)$ is the number of stones available, and $k$ the number of moves between each stone being placed. Be warned that this is not a sharp bound, as the game ends when one player has played their final stone, not when they both do.
\end{proof}

\section{Notation}\label{notation}
We will label the spaces of the board similar to a chessboard, with letters along the columns and numbers along the rows, see the initial figure. We notate playing a stone by writing the square where it is placed. If it is a standing stone this is first denoted with an $S$, if it is the capstone with a $C$. 

When moving pieces we write the square it begins on. We use $>, <, +, -$ to denote which direction the piece moves in. If the number increases (N) we use $+$, if it decreases (S) we use $-$. If the letter increases (E) we use $>$, whereas $<$ is for decreasing letters (W). Some prefer $\uparrow$, $\downarrow$, $\leftarrow$, $\rightarrow$ in place of $+, -, <, >$.

If a tower is moving, after the direction indicator we write a sequence of numbers denoting how many stones are left behind in order. So $c2>231$ takes the top six stones from $c2$, puts the bottom $2$ on $d2$, the next three on $e2$, and the final topmost stone on $f2$. Such a move would only be possible on a $6\times 6$ grid or larger for two reasons. First, $f2$ needs to exist, and second $2+3+1=6$ stones were moved\footnote{Some players would prefer to write this move $6c2>231$, where the initial $6$ indicates that six stones were moved. We omit this as it's implied by the count of stones dropped. Likewise some players use an initial $F$ to denote a flat stone, but as this is the most common we leave it unstated.}.

\section{Solution when $n=3$}\label{aturan}
The smallest nontrivial game is when $n=3$, but White has a forced win on a $3\times 3$ board, as shown by \cite{solved}, see the Appendix. We modify the rules slightly to an Aturan start \cite{takrules}, and consider if victory is determinable. We devote the remainder of the paper to showing this. 

\begin{theorem}
On a $3\times 3$ board, White has a forced win from any initial position.
\end{theorem}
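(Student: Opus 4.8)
The plan is to treat this as a finite perfect-information game and to win it constructively: for each admissible starting configuration I will exhibit an explicit White strategy that forces a completed road. By Theorem~2 the drop-every-$k$-moves rule makes every play finite, so the game is determined in the sense of Zermelo; but rather than merely appeal to determinacy I will produce White's strategy directly, which also gives an a priori bound on how long the win takes.

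First I would cut the problem down by symmetry. After the two opening placements there is exactly one white flat and one black flat on the board, with White to move; call such a configuration an \emph{initial position}. The dihedral group $D_4$ of order $8$ acts on the $3\times 3$ board and carries White strategies to White strategies, so it suffices to analyse one representative from each orbit of the two opening placements. Classifying the black stone as corner, edge, or center and then placing the white stone relative to it, modulo the relevant stabiliser, leaves only a short list of representatives to check.

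Then I would invoke Achiam's solution (reproduced in the Appendix), which already certifies a White win for all but four of these representatives. For each of the four omitted initial positions the cleanest route is a transposition argument: using the move notation of Section~\ref{notation} I would specify one or two forced White moves that carry the position into a node already shown winning for White in Achiam's tree, verifying that every Black reply either falls under the symmetry reduction or lands in a previously certified position. Where no clean transposition is available I would instead build a short self-contained response tree, answering each Black flat placement and each legal tower move; Theorem~1 caps the number of distinct tower collapses, so on a $3\times 3$ board the branching stays finite and small.

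The hard part will be exhaustiveness rather than depth. Since White must hold a reply to \emph{every} legal Black move, the real work is checking that no Black defense has been overlooked in any branch --- in particular a standing stone thrown up to sever a threatened road, or a stack move that redistributes material across the short board. Tying the four new cases back to Achiam's already-verified positions is exactly what keeps this bounded, so the crux is to pin down the correct transpositions and to confirm that the supporting diagrams cover the entire response tree.
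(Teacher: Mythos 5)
Your plan coincides with the paper's own argument: reduce by the dihedral symmetry of the board, hand the Black-in-a-corner openings to Achiam's solution reproduced in the Appendix, and supply explicit White response trees for the four remaining initial positions (the paper's Games 1--4, which cover Black starting on an edge or in the center). The only thing separating your outline from the paper's proof is that the paper actually writes out those four trees --- answering every flat, standing-stone, and stack-move defense --- and that exhaustive verification is the entire substance of the result, so to finish you would still have to produce and check the trees rather than describe them.
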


Since both initial pieces are placed randomly it does not matter who places which stones in this case, so we write it as White placing their own stone. We must add the following positions to consideration. Omitted positions are equivalent to states considered here or in the Appendix, although equivalent states reached during play are not always removed from the diagrams. Of note, if Black begins in a corner, then it does not matter where White lands, this was considered in the Appendix. 

\begin{center}
\begin{tabular}{cc}
Game 1 & Game 2\\\\
\begin{tikzpicture}
    % Draw the grid lines
    \foreach \x in {0,1,2,3} {
        \draw (\x,0) -- (\x,3); % Vertical lines
        \draw (0,\x) -- (3,\x); % Horizontal lines
    }
    
%    \foreach \y in {1,2,3} {
%        \node at (0.2, \y - 0.5) {\y};
%    }

%    \foreach \x [count=\i] in {a,b,c} {
%        \node at (\i - 0.5, 0.2) {\x};
%    }
    \filldraw[fill=white, draw=black] (0.25, 0.25) rectangle (0.75, 0.75);
    \filldraw[fill=white, draw=black] (0.25, 1.25) rectangle (0.75, 1.75);
    \filldraw[fill=lightgray, draw=black] (1.25, 0.25) rectangle (1.75, 0.75);
\end{tikzpicture}
&
\begin{tikzpicture}
    % Draw the grid lines
    \foreach \x in {0,1,2,3} {
        \draw (\x,0) -- (\x,3); % Vertical lines
        \draw (0,\x) -- (3,\x); % Horizontal lines
    }
    
%    \foreach \y in {1,2,3} {
%        \node at (0.2, \y - 0.5) {\y};
%    }

%    \foreach \x [count=\i] in {a,b,c} {
%        \node at (\i - 0.5, 0.2) {\x};
%    }
    \filldraw[fill=lightgray, draw=black] (1.25, 1.25) rectangle (1.75, 1.75);
    \filldraw[fill=white, draw=black] (1.25, 2.25) rectangle (1.75, 2.75);
    \filldraw[fill=white, draw=black] (0.25, 2.25) rectangle (0.75, 2.75);
\end{tikzpicture}\\
1. a1 b1, 2. a2 & 1. a3 b2, 2. b3\\
1. a2 b2, 2. a1 & 1. b3 b2, 2. a3\\
\\
Game 3 & Game 4\\
\\
\begin{tikzpicture}
    % Draw the grid lines
    \foreach \x in {0,1,2,3} {
        \draw (\x,0) -- (\x,3); % Vertical lines
        \draw (0,\x) -- (3,\x); % Horizontal lines
    }
    
%    \foreach \y in {1,2,3} {
%        \node at (0.2, \y - 0.5) {\y};
%    }

%    \foreach \x [count=\i] in {a,b,c} {
%        \node at (\i - 0.5, 0.2) {\x};
%    }
    \filldraw[fill=lightgray, draw=black] (1.25, 0.25) rectangle (1.75, 0.75);
    \filldraw[fill=white, draw=black] (2.25, 2.25) rectangle (2.75, 2.75);
    \filldraw[fill=white, draw=black] (1.25, 2.25) rectangle (1.75, 2.75);
\end{tikzpicture}
 & 
\begin{tikzpicture}
    % Draw the grid lines
    \foreach \x in {0,1,2,3} {
        \draw (\x,0) -- (\x,3); % Vertical lines
        \draw (0,\x) -- (3,\x); % Horizontal lines
    }
    
%    \foreach \y in {1,2,3} {
%        \node at (0.2, \y - 0.5) {\y};
%    }

%    \foreach \x [count=\i] in {a,b,c} {
%        \node at (\i - 0.5, 0.2) {\x};
%    }
    \filldraw[fill=white, draw=black] (1.25, 1.25) rectangle (1.75, 1.75);
    \filldraw[fill=lightgray, draw=black] (1.25, 0.25) rectangle (1.75, 0.75);
    \filldraw[fill=white, draw=black] (0.25, 1.25) rectangle (0.75, 1.75);
\end{tikzpicture}
\\
1. c3 b1, 2. b3 & 1. b2 b1 2. a2\\
1. b3 b1, 2. c3 & \\
\end{tabular}
\end{center}

After Game $1$ is reached, Black has three possible moves, namely $a3$, $Sa3$, and $b1<$. In the diagrams which follow, the columns are shaded according to who plays the move. We provide Whites responses, and consider each of Blacks moves which do not immediately give White a winning move. We have attempted to remove equivalent positions from the chart. We use parenthesis to indicate an optional distinction; i.e. $(S)a1$ denotes either $Sa1$ or $a1$, and $b2<(2)$ means either $b2<1$ or $b2<2$. Empty spaces to the left of moves in the diagram are understood to be the same as the first move above the empty space.

\begin{center}
\begin{tikzpicture}
\node (G1) at (3,1) {Game 1};
\filldraw[fill=lightgray, draw=none] (-0.5, 0.5) rectangle (0.5,-17);
\filldraw[fill=lightgray, draw=none] (1.5, 0.5) rectangle (2.5,-17);
\filldraw[fill=lightgray, draw=none] (3.5, 0.5) rectangle (4.5,-17);
\filldraw[fill=lightgray, draw=none] (5.5, 0.5) rectangle (6.5,-17);
\node (r1c0) at (0,0) {$a3$};
\node (r1c1) at (1,0) {$b3$};
\node (r1c2) at (2,0) {$b1<1$};
\node (r1c3) at (3,0) {$b2$};
\node (r2c2) at (2,-0.75) {$a3-1$};
\node (r2c3) at (3,-0.75) {$a3$};
\node (r2c4) at (4,-0.75) {$(S)c3$};
\node (r2c5) at (5,-0.75) {$a1+1$};
\node (r3c0) at (0,-1.5) {$Sa3$};
\node (r3c1) at (1,-1.5) {$a1>1$};
\node (r3c2) at (2,-1.5) {$(S)a1$};
\node (r3c3) at (3,-1.5) {$b2$};
\node (r4c2) at (2,-2.25) {$(S)c1$};
\node (r4c3) at (3,-2.25) {$b2$};
\node (r5c2) at (2,-3) {$(S)c3$};
\node (r5c3) at (3,-3) {$b2$};
\node (r6c2) at (2,-3.75) {$b2$};
\node (r6c3) at (3,-3.75) {$a2>1$};
\node (r6c4) at (4,-3.75) {$(S)b3$};
\node (r6c5) at (5,-3.75) {$c1$};
\node (r7c4) at (4,-4.5) {$a3>1$};
\node (r7c5) at (5,-4.5) {$c1$};
\node (r8c2) at (2,-5.25) {$(S)b3$};
\node (r8c3) at (3,-5.25) {$c1$};
\node (r9c2) at (2,-6) {$(S)c2$};
\node (r9c3) at (3,-6) {$c1$};
\node (r9c4) at (4,-6) {$c2-1$};
\node (r9c5) at (5,-6) {$b2$};
\node (r10c2) at (2,-6.75) {$a3>1$};
\node (r10c3) at (3,-6.75) {$a1$};
\node (r11c2) at (2,-7.5) {$Sb2$};
\node (r11c3) at (3,-7.5) {$c1$};
\node (r11c4) at (4,-7.5) {$Sa1$};
\node (r11c5) at (5,-7.5) {$c2$};
\node (r12c0) at (0,-8.25) {$b1<1$};
\node (r12c1) at (1,-8.25) {$a2-$};
\node (r12c2) at (2,-8.25) {$Sa2$};
\node (r12c3) at (3,-8.25) {$a1>12$};
\node (r12c4) at (4,-8.25) {$(S)a1$};
\node (r12c5) at (5,-8.25) {$b3$};
\node (r12c6) at (6,-8.25) {$(S)b2$};
\node (r12c7) at (7,-8.25) {$c2$};
\node (r13c4) at (4,-9) {$a2-1$};
\node (r13c5) at (5,-9) {$b2$};
\node (r13c6) at (6,-9) {$a1>1$};
\node (r13c7) at (7,-9) {$c2$};
\node (r14c2) at (2,-9.75) {$a2$};
\node (r14c3) at (3,-9.75) {$a1+12$};
\node (r14c4) at (4,-9.75) {$(S)a1$};
\node (r14c5) at (5,-9.75) {$b2$};
\node (r14c6) at (6,-9.75) {$a1+1$};
\node (r14c7) at (7,-9.75) {$b3$};
\node (r15c2) at (2,-10.5) {$Sa3$};
\node (r15c3) at (3,-10.5) {$a1>12$};
\node (r15c4) at (4,-10.5) {$(S)a1$};
\node (r15c5) at (5,-10.5) {$b2$};
\node (r15c6) at (6,-10.5) {$a1>1$};
\node (r15c7) at (7,-10.5) {$c2$};
\node (r16c2) at (2,-11.25) {$a3$};
\node (r16c3) at (3,-11.25) {$b1$};
\node (r16c4) at (4,-11.25) {$(S)c1$};
\node (r16c5) at (5,-11.25) {$a1+12$};
\node (r17c2) at (2,-12) {$(S)b2$};
\node (r17c3) at (3,-12) {$c3$};
\node (r17c4) at (4,-12) {$(S)b3$};
\node (r17c5) at (5,-12) {$c1$};
\node (r18c4) at (4,-12.75) {$(S)a3$};
\node (r18c5) at (5,-12.75) {$c1$};
\node (r19c4) at (4,-13.5) {$(S)a2$};
\node (r19c5) at (5,-13.5) {$c1$};
\node (r20c4) at (4,-14.25) {$b2+1$};
\node (r20c5) at (5,-14.25) {$c1$};
\node (r21c4) at (4,-15) {$b2<1$};
\node (r21c5) at (5,-15) {$c1$};
\node (r22c2) at (2,-15.75) {$(S)b3$};
\node (r22c3) at (3,-15.75) {$a2$};
\node (r22c4) at (4,-15.75) {$b3<1$};
\node (r22c5) at (5,-15.75) {$a1>12$};
\node (r23c4) at (4,-16.5) {$(S)a3$};
\node (r23c5) at (5,-16.5) {$a1>12$};
\end{tikzpicture}
\end{center}

\begin{center}
\begin{tikzpicture}
\filldraw[fill=lightgray, draw=none] (0.5, -0.5) rectangle (1.5,-16.5);
\filldraw[fill=lightgray, draw=none] (2.5, -0.5) rectangle (3.5,-15.5);
\filldraw[fill=lightgray, draw=none] (4.5, -0.5) rectangle (5.5,-16.5);
\filldraw[fill=lightgray, draw=none] (6.5, -0.5) rectangle (7.5,-16.5);
\filldraw[fill=lightgray, draw=none] (9.5, -0.5) rectangle (10.5,-16.5);
\filldraw[fill=lightgray, draw=none] (11.5, -0.5) rectangle (12.5,-16.5);
%\filldraw[fill=lightgray, draw=none] (12.5, -0.5) rectangle (13.5,-16.5);
\draw[-] (8.5,-0.5) -- (8.5,-16.5);
%\filldraw[fill=lightgray, draw=none] (8.5, -0.5) rectangle (9.5,-15.5);
\node (G1) at (6.5, 0) {Game 2};
\node (r1c1) at (1,-1) {$b2+$};
\node (r1c2) at (2,-1) {$a2$};
\node (r1c3) at (3,-1) {$a1$};
\node (r1c4) at (4,-1) {$b1$};
\node (r1c5) at (5,-1) {$a1+1$};
\node (r1c6) at (6,-1) {$a1$};
\node (r1c7) at (7,-1) {$(S)c1$};
\node (r1c8) at (8,-1) {$a3-1$};
\node (r2c7) at (7,-2) {$b3-11$};
\node (r2c8) at (8,-2) {$b3$};
\node (r3c5) at (5,-3) {$b3<(2)$};
\node (r3c6) at (6,-3) {$b2$};
\node (r4c3) at (3,-4) {$b3<1$};
\node (r4c4) at (4,-4) {$b2$};
\node (r5c3) at (3,-5) {$b3<2$};
\node (r5c4) at (4,-5) {$a2+$};
\node (r6c3) at (3,-6) {$Sa1$};
\node (r6c4) at (4,-6) {$c2$};
\node (r6c5) at (5,-6) {$b3-2$};
\node (r6c6) at (6,-6) {$a2>1$};
\node (r7c5) at (5,-7) {$a1+1$};
\node (r7c6) at (6,-7) {$c3$};
\node (r7c7) at (7,-7) {$(S)c1$};
\node (r7c8) at (8,-7) {$a3>1$};
\node (r8c5) at (5,-8) {$(S)b2$};
\node (r8c6) at (6,-8) {$a3>1$};
\node (r8c7) at (7,-8) {$b2>1$};
\node (r8c8) at (8,-8) {$a3$};
\node (r9c7) at (7,-9) {$a1+1$};
\node (r9c8) at (8,-9) {$c3$};
\node (r10c7) at (7,-10) {$b2<1$};
\node (r10c8) at (8,-10) {$c3$};
\node (r11c1) at (1,-11) {$c3$};
\node (r11c2) at (2,-11) {$a1$};
\node (r11c3) at (3,-11) {$b2<1$};
\node (r11c4) at (4,-11) {$b1$};
\node (r12c3) at (3,-12) {$Sa2$};
\node (r12c4) at (4,-12) {$c1$};
\node (r12c5) at (5,-12) {$(S)b1$};
\node (r12c6) at (6,-12) {$c2$};
\node (r13c5) at (5,-13) {$a2-1$};
\node (r13c6) at (6,-13) {$b3>1$};
\node (r14c5) at (5,-14) {$b2-1$};
\node (r14c6) at (6,-14) {$b3>1$};
\node (r15c3) at (3,-15) {$a2$};
\node (r15c4) at (4,-15) {$a3-1$};
\node (r15c5) at (5,-15) {$Sa3$};
\node (r15c6) at (6,-15) {$b1$};
\node (r16c1) at (1,-16) {$Sc3$};
\node (next) at (3, -16) {See second column};
\node (r16c2) at (9,-1) {$b3-1$};
\node (r16c3) at (10,-1) {$c3<1$};
\node (r16c4) at (11,-1) {$a2$};
\node (r17c3) at (10,-2) {$(S)b3$};
\node (r17c4) at (11,-2) {$a2$};
\node (r18c3) at (10,-3) {$(S)b1$};
\node (r18c4) at (11,-3) {$a2$};
\node (r19c3) at (10,-4) {$Sc1$};
\node (r19c4) at (11,-4) {$a2$};
\node (r20c3) at (10,-5) {$Sa1$};
\node (r20c4) at (11,-5) {$a2$};
\node (r21c3) at (10,-6) {$(S)c2$};
\node (r21c4) at (11,-6) {$a2$};
\node (r22c3) at (10,-7) {$c3-1$};
\node (r22c4) at (11,-7) {$a2$};
\node (r23c3) at (10,-8) {$(S)a2$};
\node (r23c4) at (11,-8) {$b1$};
\node (r23c5) at (12,-8) {$Sb3$};
\node (r23c6) at (13,-8) {$a1$};
\node (r24c5) at (12,-9) {$c3<1$};
\node (r24c6) at (13,-9) {$a1$};
\node (r25c5) at (12,-10) {$a2>1$};
\node (r25c6) at (13,-10) {$a1$};
\end{tikzpicture}
\end{center}

\begin{center}
\begin{tikzpicture}
\filldraw[fill=lightgray, draw=none] (0.5, -0.5) rectangle (1.5,-12.5);
\filldraw[fill=lightgray, draw=none] (2.5, -0.5) rectangle (3.5,-12.5);
\filldraw[fill=lightgray, draw=none] (4.5, -0.5) rectangle (5.5,-12.5);
\filldraw[fill=lightgray, draw=none] (6.5, -0.5) rectangle (7.5,-12.5);
\node (G1) at (4.5, 0) {Game 3};
\node (r1c1) at (1,-1) {$Sa3$};
\node (r1c2) at (2,-1) {$a2$};
\node (r1c3) at (3,-1) {$Sb2$};
\node (r1c4) at (4,-1) {$c2$};
\node (r1c5) at (5,-1) {$b1>1$};
\node (r1c6) at (6,-1) {$b1$};
\node (r1c7) at (7,-1) {$c1+1$};
\node (r1c8) at (8,-1) {$a1$};
\node (r2c3) at (3,-2) {$a3>1$};
\node (r2c4) at (4,-2) {$c2$};
\node (r3c3) at (3,-3) {$b1+1$};
\node (r3c4) at (4,-3) {$c2$};
\node (r4c3) at (3,-4) {$b2$};
\node (r4c4) at (4,-4) {$c2$};
\node (r5c1) at (1,-5) {$a3$};
\node (r5c2) at (2,-5) {$a2$};
\node (r5c3) at (3,-5) {$a3>1$};
\node (r5c4) at (4,-5) {$c3<1$};
\node (r5c5) at (5,-5) {$(S)c2$};
\node (r5c6) at (6,-5) {$a3$};
\node (r6c5) at (5,-6) {$Sb2$};
\node (r6c6) at (6,-6) {$b3<2$};
\node (r7c5) at (5,-7) {$b1+1$};
\node (r7c6) at (6,-7) {$b-12$};
\node (r8c5) at (5,-8) {$(S)c1$};
\node (r8c6) at (6,-8) {$b-12$};
\node (r9c5) at (5,-9) {$b2$};
\node (r9c6) at (6,-9) {$b-12$};
\node (r10c5) at (5,-10) {$(S)a1$};
\node (r10c6) at (6,-10) {$b-12$};
\node (r11c5) at (5,-11) {$(S)a3$};
\node (r11c6) at (6,-11) {$b-12$};
\node (r12c5) at (5,-12) {$(S)c3$};
\node (r12c6) at (6,-12) {$b-12$};
\end{tikzpicture}
\end{center}

Game $4$ is noteworthy because this position is avoidable unless White's first stone is in the center. We once again make mention that if a move is not considered it is because an equivalent move is.

\begin{center}
\begin{tikzpicture}
\filldraw[fill=lightgray, draw=none] (0.5, -0.5) rectangle (1.5,-10.5);
\filldraw[fill=lightgray, draw=none] (2.5, -0.5) rectangle (3.5,-10.5);
\filldraw[fill=lightgray, draw=none] (4.5, -0.5) rectangle (5.5,-10.5);
\filldraw[fill=lightgray, draw=none] (6.5, -0.5) rectangle (7.5,-10.5);
\filldraw[fill=lightgray, draw=none] (8.5, -0.5) rectangle (9.5,-10.5);
\node (G1) at (5.5, 0) {Game 4};
\node (r1c1) at (1,-1) {$b1+$};
\node (r6c1) at (1, -6) {$c2$};
\node (r7c1) at (1, -7) {$Sc2$};
\node (r1c2) at (2, -1) {$c2$};
\node (r6c2) at (2, -6) {$b3$};
\node (r7c2) at (2, -7) {$b3$};
\node (r1c3) at (3, -1) {$b2>2$};
\node (r5c3) at (3, -5) {$(S)b3$};
\node (r6c3) at (3, -6) {$c2<$};
\node (r7c3) at (3, -7) {$b1+$};
\node  (r8c3) at (3, -8) {$c2<$};
\node (r1c4) at (4, -1) {$b2$};
\node (r5c4) at (4, -5) {$b1$};
\node (r6c4) at (4, -6) {$a3$};
\node (r7c4) at (4, -7) {$a3$};
\node (r8c4) at (4, -8) {$a3$};
\node (r1c5) at (5, -1) {$c2<3$};
\node (r3c5) at (5, -3) {$c1$};
\node (r5c5) at (5, -5) {$b2>2$};
\node (r1c6) at (6, -1) {$c2$};
\node (r3c6) at (6, -3) {$c3$};
\node (r5c6) at (6, -5) {$a1$};
\node (r1c7) at (7, -1) {$Sb1$};
\node (r2c7) at (7, -2) {$a1$};
\node (r3c7) at (7, -3) {$c2<3$};
\node (r4c7) at (7, -4) {$c2<2$};
\node (r1c8) at (8, -1) {$b3$};
\node (r2c8) at (8, -2) {$a2>$};
\node (r3c8) at (8, -3) {$a3$};
\node (r4c8) at (8, -4) {$a3$};
\node (r1c9) at (9, -1) {$b1+$};
\node (r2c9) at (9, -2) {$Sa2$};
\node (r1c10) at (10, -1) {$a3$};
\node (r2c10) at (10, -2) {$c1$};
\end{tikzpicture}
\end{center}

%\section{Solution when $n=4$}
%There is an issue with continuing to larger board sizes. When $n=3$, White's second move can always threaten victory, severely limiting Blacks responses. However, this is not the case for larger boards, and the increased freedom given to Black causes the amount of possible positions we need to consider to grow. Further, since $n>4$ includes the introduction of the capstone as an available piece for both players, it once again gives Black more move options which we must consider. Due to these complications we will limit our consideration to the case where $n=4$. 

%\section{questions for us}
%3x3 and 4x4 should be completely solvable\\
%Is there a best opening move (for $n\geq5$)?\\
%How many games possible? Is there an obvious bound as a function of board size?\\
%is the lack of a center square on even board sizes important?\\
%Strategies\\
%How to evaluate if a board state is better\\
%When should the capstone be played?\\
%Is the height of the capstone a good indicator of who's doing better?

%How to evaluate which players exert influence where\\

%We do not consider:\\
%variations where you can change direction of falling towers\\
%capstone banned\\
%trouper rules (potentially restrict to color)\\

\section*{Appendix}\label{A}
While \cite{takrules} claims there is a solution for $n=3$ none is provided. This was later shown by \cite{solved}. We verify this using \cite{takbot} and faithfully recreate their results here, as it hasn't appeared in the literature previously.

White begins by placing their opponents piece in a corner, without loss of generality $a1$. Black has $5$ possible unique responses, but after White's next move there are only three games to consider, up to symmetry. %Again w/o loss of generality.

\begin{center}
\begin{tabular}{ccc}
Game A & Game B & Game C \\
\begin{tikzpicture}
    % Draw the grid lines
    \foreach \x in {0,1,2,3} {
        \draw (\x,0) -- (\x,3); % Vertical lines
        \draw (0,\x) -- (3,\x); % Horizontal lines
    }
    
%    \foreach \y in {1,2,3} {
%        \node at (0.2, \y - 0.5) {\y};
%    }

%    \foreach \x [count=\i] in {a,b,c} {
%        \node at (\i - 0.5, 0.2) {\x};
%    }
    \filldraw[fill=lightgray, draw=black] (0.25, 0.25) rectangle (0.75, 0.75);
    \filldraw[fill=white, draw=black] (1.25, 1.25) rectangle (1.75, 1.75);
    \filldraw[fill=white, draw=black] (0.25, 1.25) rectangle (0.75, 1.75);
\end{tikzpicture}
&
\begin{tikzpicture}
    % Draw the grid lines
    \foreach \x in {0,1,2,3} {
        \draw (\x,0) -- (\x,3); % Vertical lines
        \draw (0,\x) -- (3,\x); % Horizontal lines
    }
    
%    \foreach \y in {1,2,3} {
%        \node at (0.2, \y - 0.5) {\y};
%    }

%    \foreach \x [count=\i] in {a,b,c} {
%        \node at (\i - 0.5, 0.2) {\x};
%    }
    \filldraw[fill=lightgray, draw=black] (0.25, 0.25) rectangle (0.75, 0.75);
    \filldraw[fill=white, draw=black] (1.25, 2.25) rectangle (1.75, 2.75);
    \filldraw[fill=white, draw=black] (0.25, 2.25) rectangle (0.75, 2.75);
\end{tikzpicture}
&
\begin{tikzpicture}
    % Draw the grid lines
    \foreach \x in {0,1,2,3} {
        \draw (\x,0) -- (\x,3); % Vertical lines
        \draw (0,\x) -- (3,\x); % Horizontal lines
    }
    
%    \foreach \y in {1,2,3} {
%        \node at (0.2, \y - 0.5) {\y};
%    }

%    \foreach \x [count=\i] in {a,b,c} {
%        \node at (\i - 0.5, 0.2) {\x};
%    }
    \filldraw[fill=lightgray, draw=black] (0.25, 0.25) rectangle (0.75, 0.75);
    \filldraw[fill=white, draw=black] (2.25, 2.25) rectangle (2.75, 2.75);
    \filldraw[fill=white, draw=black] (0.25, 2.25) rectangle (0.75, 2.75);
\end{tikzpicture}\\
1. a1 a2, 2. b2 & 1. a1 a3, 2. b3 & 1. a1 c3, 2. a3\\
1. a1 b2, 2. a2 & 1. a1 b3, 2. a3 &
\end{tabular}
\end{center}

%Throughout, where multiple moves attain a board state, will use the first row listed.
We consider each game separately. Any time a move is not considered, it is because it would allow white to win on the next move, or because it brings the board to an equivalent state as one which is considered. %The columns are shaded according to which play is making the move. When more than one move is available the left-most columns are left empty, indicating that it continues from the moves which have already been played.
Due to its simplicity, we've elected to make Game B a tree. We continue to shade the rows to denote which player makes which move. After Game $C$ is reached Black has two moves: $b3$, $Sb3$.

\begin{center}
\begin{tikzpicture}
\node (G1) at (6, 1) {Game A};
\filldraw[fill=lightgray, draw=none] (-0.5, 0.5) rectangle (0.5,-17.5);
\filldraw[fill=lightgray, draw=none] (1.5, 0.5) rectangle (2.5,-17.5);
\filldraw[fill=lightgray, draw=none] (3.5, 0.5) rectangle (4.5,-17);
\filldraw[fill=lightgray, draw=none] (7.5, 0.5) rectangle (8.5,-20.5);
\filldraw[fill=lightgray, draw=none] (9.5, 0.5) rectangle (10.5,-20.5);
\filldraw[fill=lightgray, draw=none] (11.5, 0.5) rectangle (12.5,-20.5);
\draw[-] (6, 0.5) -- (6, -20.5);
\node (r1c1) at (0,0) {$c2$};
\node (r1c2) at (1,0) {$b3$};
\node (r1c3) at (2,0) {$c2<1$};
\node (r1c4) at (3,0) {$a2>1$};
\node (r1c5) at (4,0) {$Sb1$};
\node (r1c6) at (5,0) {$a3$};
\node (r2c1) at (0,-0.75) {$Sc2$};
\node (r2c2) at (1,-0.75) {$a2-1$};
\node (r2c3) at (2,-0.75) {$(S)a2$};
\node (r2c4) at (3,-0.75) {$b1$};
\node (r3c3) at (2,-1.5) {$(S)a3$};
\node (r3c4) at (3,-1.5) {$b1$};
\node (r4c3) at (2,-2.25) {$(S)b1$};
\node (r4c4) at (3,-2.25) {$a3$};
\node (r4c5) at (4,-2.25) {$Sa2$};
\node (r4c6) at (5,-2.25) {$b3$};
\node (r5c5) at (4,-3) {$b1<1$};
\node (r5c6) at (5,-3) {$b3$};
\node (r6c3) at (2,-3.75) {$b3$};
\node (r6c4) at (3,-3.75) {$a3$};
\node (r6c5) at (4,-4.5) {$Sa2$};
\node (r6c6) at (5,-4.5) {$b3$};
\node (r7c5) at (4,-5.25) {$b1<1$};
\node (r7c6) at (5,-5.25) {$b3$};
\node (r8c3) at (2,-6) {$b3$};
\node (r8c4) at (3,-6) {$a3$};
\node (r8c5) at (4,-6) {$Sa2$};
\node (r8c6) at (5,-6) {$b1$};
\node (r9c5) at (4,-6.75) {$b3<1$};
\node (r9c6) at (5,-6.75) {$b1$};
\node (r10c3) at (2,-7.5) {$Sb3$};
\node (r10c4) at (3,-7.5) {$a2$};
\node (r10c5) at (4,-7.5) {$(S)a3$};
\node (r10c6) at (5,-7.5) {$b1$};
\node (r11c3) at (2,-8.25) {$(S)c1$};
\node (r11c4) at (3,-8.25) {$a2$};
\node (r12c3) at (2,-9) {$c2<1$};
\node (r12c4) at (3, -9) {$c3$};
\node (r12c5) at (4, -9) {$(S)a2$};
\node (r12c6) at (5, -9) {$c1$};
\node (r13c5) at (4, -9.75) {$(S)a3$};
\node (r13c6) at (5, -9.75) {$c1$};
\node (r14c5) at (4, -10.5) {$b2<1$};
\node (r14c6) at (5, -10.5) {$b1$};
\node (r15c5) at (4, -11.25) {$b2<2$};
\node (r15c6) at (5, -11.25) {$c1$};
\node (r16c5) at (4, -12) {$b2>1$};
\node (r16c6) at (5, -12) {$a2$};
\node (r17c5) at (4, -12.75) {$b2>2$};
\node (r17c6) at (5, -12.75) {$a3$};
\node (r18c5) at (4, -13.5) {$(S)b3$};
\node (r18c6) at (5, -13.5) {$c1$};
\node (r19c3) at (2,-14.25) {$c2\pm1$};
\node (r19c4) at (3,-14.25) {$a2$};
\node (r20c3) at (2,-15) {$(S)c3$};
\node (r20c4) at (3, -15) {$a2$};
\node (r21c1) at (0, -15.75) {$a1+1$};
\node (r21c2) at (1, -15.75) {$b3$};
\node (r21c3) at (2, -15.75) {$a2>1$};
\node (r21c4) at (3, -15.75) {$a3$};
\node (r22c3) at (2, -16.5) {$(S)b1$};
\node (r22c4) at (3, -16.5) {$a3$};
\node (r22c5) at (4,-16.5) {$a2+2$};
\node (r22c6) at (5,-16.5) {$b3<1$};
\node (r23c3) at (2, -17.25) {$a2>2$};
\node[right] (move) at (3, -17.25) {See second column};
\node (r0c7) at (7,0) {$a3$};
\node (r0c8) at (8,0) {$b2+1$};
\node (r0c9) at (9,0) {$a2$};
\node (r1c8) at (8,-0.75) {$b2+2$};
\node (r1c9) at (9,-0.75) {$a2$};
\node (r2c8) at (8,-1.5) {$b2+3$};
\node (r2c9) at (9,-1.5) {$b2$};
\node (r2c10) at (10,-1.5) {$(S)a1$};
\node (r2c11) at (11,-1.5) {$b2+1$};
\node (r3c10) at (10,-2.25) {$(S)a2$};
\node (r3c11) at (11,-2.25) {$b2+1$};
\node (r4c10) at (10,-3) {$(S)b1$};
\node (r4c11) at (11,-3) {$a2$};
\node (r5c10) at (10,-3.75) {$(S)c1$};
\node (r5c11) at (11,-3.75) {$a2$};
\node (r6c10) at (10,-4.5) {$(S)c2$};
\node (r6c11) at (11,-4.5) {$a2$};
\node (r7c10) at (10,-5.25) {$(S)c3$};
\node (r7c11) at (11,-5.25) {$a2$};
\node (r8c8) at (8,-6) {$c3$};
\node (r8c9) at (9,-6) {$a1$};
\node (r8c10) at (10,-6) {$a2$};
\node (r8c11) at (11,-6) {$a3-1$};
\node (r8c12) at (12,-6) {$Sa3$};
\node (r8c13) at (13,-6) {$b1$};
\node (r9c12) at (12,-6.75) {$b2<3$};
\node (r9c13) at (13,-6.75) {$b1$};
\node (r10c10) at (10,-7.5) {$Sa2$};
\node (r10c11) at (11,-7.5) {$c1$};
\node (r10c12) at (12,-7.5) {$a2-1$};
\node (r10c13) at (13,-7.5) {$b3>1$};
\node (r11c12) at (12,-8.25) {$b2-3$};
\node (r11c13) at (13,-8.25) {$a1>1$};
\node (r12c10) at (10,-9) {$b2<3$};
\node (r12c11) at (11,-9) {$a1+1$};
\node (r13c8) at (8,-9.75) {$Sc3$};
\node (r13c9) at (9,-9.75) {$b1$};
\node (r13c10) at (10,-9.75) {$(S)a1$};
\node (r13c11) at (11,-9.75) {$b3-1$};
\node (r13c12) at (12,-9.75) {$a1>1$};
\node (r13c13) at (13,-9.75) {$a2$};
\node (r14c10) at (10,-10.5) {$a2$};
\node (r14c11) at (11,-10.5) {$a3-1$};
\node (r14c12) at (12,-10.5) {$b2-3$};
\node (r14c13) at (13,-10.5) {$a1$};
\node (r15c12) at (12,-11.25) {$b2+3$};
\node (r15c13) at (13,-11.25) {$a1$};
\node (r16c12) at (12,-12) {$c3<1$};
\node (r16c13) at (13,-12) {$a1$};
\node (r17c10) at (10,-12.75) {$Sa2$};
\node (r17c11) at (11,-12.75) {$b3-1$};
\node (r17c12) at (12,-12.75) {$a2>1$};
\node (r17c13) at (13,-12.75) {$a1$};
\node (r18c12) at (12,-13.5) {$Sb3$};
\node (r18c13) at (13,-13.5) {$a1$};
\node (r19c12) at (12,-14.25) {$c3<1$};
\node (r19c13) at (13,-14.25) {$a1$};
\node (r20c10) at (10,-15) {$b2-3$};
\node (r20c11) at (11,-15) {$b2$};
\node (r20c12) at (12,-15) {$b1+3$};
\node (r20c13) at (13,-15) {$a1$};
\node (r21c12) at (12,-15.75) {$(S)c2$};
\node (r21c13) at (13,-15.75) {$a1$};
\node (r22c12) at (12,-16.5) {$c3<1$};
\node (r22c13) at (13,-16.5) {$a2$};
\node (r23c10) at (10,-17.25) {$b2+3$};
\node (r23c11) at (11,-17.25) {$a1$};
\node (r24c10) at (10,-18) {$(S)c1$};
\node (r24c11) at (11,-18) {$a1$};
\node (r25c10) at (10,-18.75) {$c2$};
\node (r25c11) at (11,-18.75) {$a2$};
\node (r26c10) at (10,-19.5) {$Sc2$};
\node (r26c11) at (11,-19.5) {$a1$};
\node (r27c10) at (10,-20.25) {$c3<1$};
\node (r27c11) at (11,-20.25) {$a1$};
\end{tikzpicture}
\end{center}

\begin{center}
\begin{tikzpicture}
\filldraw[fill=lightgray, draw=none] (-3, 4.5) rectangle (3,5.5);
\filldraw[fill=lightgray, draw=none] (-3, 2.5) rectangle (3,3.5);
\filldraw[fill=lightgray, draw=none] (-3, 0.5) rectangle (3,1.5);
\node (B) at (0, 6) {Game B};
\node (c3) at (-1, 5) {$c3$};
\node (Sc3) at (1, 5) {$Sc3$};
\node (b2) at (0, 4) {$b2$};
\node (2c3) at (0,3) {$c3<$};
\node (c2) at (0,2) {$c2$};
\node (Sa2) at (-1,1) {$Sa2$};
\node (b3) at (1,1) {$b3-2$};
\node (c1) at (-1, 0) {$c1$};
\node (3c3) at (1,0) {$c3$};
\draw[-] (B) -- (c3) -- (b2) -- (Sc3)--(B);
\draw[-] (b2) -- (2c3) -- (c2) -- (Sa2) -- (c1);
\draw[-] (c2) -- (b3) -- (3c3);
\end{tikzpicture}
\end{center}

\setlength{\tabcolsep}{1pt}
\begin{center}
\begin{table}[h]
\begin{tabular}{>{\columncolor[gray]{0.8}}cc>{\columncolor[gray]{0.8}}cc>{\columncolor[gray]{0.8}}cc>{\columncolor[gray]{0.8}}cc>{\columncolor[gray]{0.8}}cc|>{\columncolor[gray]{0.8}}cc>{\columncolor[gray]{0.8}}cc>{\columncolor[gray]{0.8}}cc}
&&&&&&&&Game C&&&&&&&\\
b3 &  c1 &  b3\textgreater{}1 &  b2 &  a1\textgreater{}1 &  a2 &   &   &   &   &  Sb2 &  c2 &  a1\textgreater{}1 &  a2 &   & \\
 &   &   &   &  a2 &  a3-1 &  a1+1 &  b1 &   &   &   &   &  b2\textgreater{}1 &  a2 &  a1+1 &  b1 \\
 &   &   &   &  Sa2 &  b3 &  a2\textgreater{}1 &  b3\textgreater{}1 &   &   &   &   &   &   &  c2\textless{}2 &  a2-1 \\
 &   &   &   &   &   &  c3\textless{}2 &  b2+1 &   &   &   &   &   &   &  c2-2 &  b2 \\
 &   &  &   &  b3 &  a3\textgreater{}1 &  c3\textless{}2 &  c2 &   &   &   &   &   &   &  c3\textless{}(2) &  a2-1 \\
 &   &   &   &  Sb3 &  a2 &  a1+1 &  b1 &   &   &  c2 &  b1 &  a1\textgreater{}1 &  a2 &   &   \\
 &   &   &   &   &   &  b3-1 &  a2-1 &   &   &   &   &  c2\textless{}1 &  b1\textless{}1 &   &   \\
 &   &   &   &  c3\textless{}2 &  a3\textgreater{}1 &   &   &   &   &   &   &  c3\textless{}1 &  a2 &   &   \\
 &   &  c2 &  b2 &  b3\textgreater{}1 &  a2 &  a1+1 &  b1 &   &   &   &   &  c3\textless{}2 &  a2 &   &   \\
 &   &   &   &   &   &  c2\textless{}1 &  a2-1 &   &   &  Sc2 &  a2 &  a1+1 &  b1 &   &   \\
 &   &  Sc2 &  a2 &  a1+1 &  b1 &  a2-2 &  a2 &   &   &   &   &  c2\textless{}1 &  a2-1 &   &   \\
 &   &   &   &   &   &  c2-1 &  a3\textgreater{}1 &   &   &   &   &  c2-1 &  b2 &   &   \\
 &   &   &   &  b3\textless{}1 &  a2+1 &  Sb3 &  a3-12 &   &   &   &   &  c3\textless{}1 &  a2-1 &   &   \\
 &   &   &   &   &   &  c2+1 &  a3-12 &   &   &   &   &  c3\textless{}2 &  a2-1 &   &   \\
 &   &   &   &  b3\textgreater{}1 &  a2-1 &   &   &   &   &  c3\textless{}11 &  c2 &   &   &   &   \\
 &   &   &   &  c2-1 &  a3\textgreater{}1 &   &   &   &   &  c3\textless{}2 &  a2 &  a1+1 &  a3-1 &  a1 &  a2-2 \\
 &   &   &   &  c2+1 &  a2-1 &   &   &   &   &   &   &   &   &  Sa1 &  a2\textgreater{}12 \\
Sb3 &  c1 &  b3\textgreater{}1 &  b3 &  \multicolumn{5}{l}{Second column contains more moves} &   &   &   &   &   &  a3 &  a1 \\
 &   &   &   &  a1\textgreater{}1 &  a2 &  a1+1 &  c1\textless{}1 &   &   &   &   &   &   &  Sa3 &  a2\textgreater{}12 \\
 &   &   &   &   &   &  Sa1 &  c1+1 &   &  &   &   &   &   &  (S)b1 &  a2\textgreater{}12 \\
 &   &   &   &   &   &  b1\textless{}1 &  a2-1 &   &   &   &   &   &   &  (S)b2 &  a1 \\
 &   &   &   &  a1+1 &  b1 &   &   &   &   &   &   &   &   &  b3\textless{}3 &  a2\textgreater{}12 \\
 &   &   &   &  a2 &  b1 &  a1\textgreater{}1 &  c2 &  a2\textgreater{}1 &  a2 &   &   &   &   &  b3-(3) &  a1 \\
 &   &   &   &   &   &   &   &  b1+2 &  b1 &   &   &   &   &  b3\textgreater{}3 &  a1 \\
 &   &   &   &   &   &  a2\textgreater{}1 &  b1\textless{}1 &   &   &   &   &   &   &  (S)c2 &  a1 \\
 &   &   &   &   &   &  c3\textless{}2 &  a3-1 &  a1\textgreater{}1 &  c2 &   &   &   &   &  (S)c3 &  a1 \\
 &   &   &   &  Sa2 &  b1 &  a1\textgreater{}1 &  c2 &  a2\textgreater{}1 &  a2 &   &   &  (S)b1 &  c2 &   &   \\
 &   &   &   &   &   &  a2\textgreater{}1 &  b1\textless{}1 &   &   &   &   &  (S)b2 &  a2-1 &   &   \\
 &   &   &   &   &   &  c3\textless{}2 &  c2 &  b3\textgreater{}3 &  b3 &   &   &  b3\textless{}2 &  b2 &   &   \\
 &   &   &   &  b1 &  c2 &  b1+1 &  a2-1 &  a1+1 &  b1 &   &   &  b3\textless{}3 &  c2 &   &   \\
 &   &   &   &   &   &   &   &  b2\textgreater{}1 &  a2-1 &   &   &  b3-3 &  a2-1 &   &   \\
 &   &   &   &  Sb1 &  a2 &  c1+1 &  c2 &  a2\textgreater{}2 &  a2 &   &   &  b3\textgreater{}(3) &  a2-1 &   &   \\
 &   &   &   &   &   &   &   &  b1+1 &  b1 &   &   &  (S)c2 &  a2-1 &   &   \\
 &   &   &   &   &   &  b1\textless{}1 &  b2 &   &   &   &   &  (S)c3 &  a2-1 &   &   \\
 &   &   &   &   &   &  b1\textgreater{}1 &  b2 &   &   &  b2 &  a2 &  a1+1 &  a3-1 &  b2\textless{}1 &  b1 \\
 &   &   &   &   &   &  b1+1 &  a2-1 &   &   &   &   &   &   &  b2+1 &  a1 \\
 &   &   &   &   &   &  c3\textless{}(2) &  c2 &   &   &   &   &   &   &  Sc2 &  a1 \\
 &   &  c2 &  b1 &  a1\textgreater{}1 &  a2 &  b1\textless{}2 &  a2-1 &   &   &   &   &   &   &  c3\textless{}2 &  a1 \\
 &   &   &   &   &   &  b3\textless{}1 &  c1+1 &   &   &   &   &   &   &  c3-2 &  a1 \\
 &   &   &   &  b3\textless{}1 &  b2 &   &   &   &   &   &   &  b2\textless{}1 &  a3-1 &  a1+1 &  b1 \\
 &   &   &   &  b3\textgreater{}1 &  a2 &   &   &   &   &   &   &   &   &  Sb2 &  a2-2 \\
 &   &   &   &  c2-1 &  b1\textgreater{}1 &  b3\textgreater{}1 &  c1\textless{}12 &   &   &   &   &   &   &  Sc2 &  a2-2 \\
 &   &   &   &   &   &  Sc2 &  c12\textless{}12 &   &   &   &   &   &   &  c3\textless{}2 &  a2-2 \\
 &   &   &   &  c2+1 &  a2 &   &   &   &   &   &   &   &   &  c3-2 &  a2-2 \\
 &   &  Sc2 &  b2 &  a1\textgreater{}1 &  a2 &   &   &   &   &   &   &  b2-1 &  c1\textless{}1 &   &   \\
 &   &   &   &  b3\textless{}1 &  a2 &  a1+1 &  b1 &   &   &   &   &  b2\textgreater{}1 &  a2-1 &   &   \\
 &   &   &   &   &   &  a3-2 &  b3 &   &   &   &   &  b2+1 &  a2-1 &   &   \\
 &   &   &   &  b3\textgreater{}1 &  a2 &  a1+1 &  b1 &   &   &   &   &  c3\textless{}2 &  a2-1 &   &   \\
 &   &   &   &   &   &  c2\textless{}1 &  a2-1 &   &   &   &   &   &   &   &
\end{tabular}
\end{table}
\end{center}

\end{document}